\newtheorem{theorem}{Theorem}[section]
\newtheorem{conjecture}[theorem]{Conjecture}
\newtheorem{remark}[theorem]{Remark}
\title{This is the title}
\begin{document}
\begin{center}
{\bf{   C*-ALGEBRAIC SCHOENBERG CONJECTURE}\\
K. MAHESH KRISHNA}  \\
Post Doctoral Fellow \\
Statistics and Mathematics Unit\\
Indian Statistical Institute, Bangalore Centre\\
Karnataka 560 059 India\\
Email: kmaheshak@gmail.com\\

Date: \today
\end{center}

\hrule
\vspace{0.5cm}
%--------------------------------------
\textbf{Abstract}: Based on Schoenberg conjecture \textit{[Amer. Math. Monthly., 1986]}/Malamud-Pereira theorem  \textit{[J. Math. Anal. Appl, 2003]}, \textit{[Trans. Amer. Math. Soc., 2005]} we formulate the following conjecture which we call C*-algebraic Schoenberg  Conjecture.\\
\textbf{ C*-algebraic Schoenberg Conjecture : Let  $\mathcal{A}$ be a   C*-algebra. Let $d\in \mathbb{N}\setminus\{1\}$, $P(z)	\coloneqq (z-a_1)(z-a_2)\cdots (z-a_d)$ be a polynomial over $\mathcal{A}$ with $a_1, a_2, \dots, a_d \in \mathcal{A} $. If  $P'$ can be written as   $P'(z)= d(z-b_1)(z-b_2)\cdots (z-b_{d-1})$ on $\mathcal{A}$ with $b_1, b_2, \dots, b_{d-1} \in \mathcal{A} $, then 
 	\begin{align*}
 		\sum_{k=1}^{d-1}b_kb_k^*\leq \frac{1}{d^2}\left[\sum_{j=1}^{d}a_j\right]\left[\sum_{j=1}^{d}a_j\right]^*+	\frac{d-2}{d}\sum_{j=1}^{d}a_ja_j^*
 	\end{align*}
 	and 
 	\begin{align*}
 		\sum_{k=1}^{d-1}b_k^*b_k\leq \frac{1}{d^2}\left[\sum_{j=1}^{d}a_j\right]^*\left[\sum_{j=1}^{d}a_j\right]+	\frac{d-2}{d}\sum_{j=1}^{d}a_j^*a_j.
 \end{align*}}
We show that  C*-algebraic Schoenberg conjecture  holds for  degree 2 C*-algebraic polynomials over   C*-algebras. \\
\textbf{Keywords}: C*-algebra, Schoenberg conjecture.\\
\textbf{Mathematics Subject Classification (2020)}: 46L05, 30C10, 30C15.\\

\hrule
\tableofcontents
\hrule
\section{Introduction}
In 1986 Schoenberg made the following conjecture in the \textit{American Mathematical Monthly} which is known as Schoenberg conjecture \cite{SCHOENBERG1986}. 
\begin{conjecture} \cite{SCHOENBERG1986, MALAMUD, PEREIRA, DEBRUINIVANOVSHARMA}\label{SCHOENBERGCONJECTURE} \textbf{(Schoenberg Conjecture/Malamud-Pereira Theorem)
		Let $P(z)\coloneqq (z-a_1)(z-a_2)\cdots (z-a_d)$ be a degree $d\geq2$ polynomial over $ \mathbb{C}$. Let $b_1, b_2, \dots, b_{d-1}$ be zeros of   $P'$. 
		Then 
		\begin{align*}
			\sum_{k=1}^{d-1}|b_k|^2\leq \frac{1}{d^2}\left|\sum_{j=1}^{d}a_j\right|^2+	\frac{d-2}{d}\sum_{j=1}^{d}|a_j|^2.
	\end{align*}}
\end{conjecture}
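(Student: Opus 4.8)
The plan is to realize the critical points $b_1,\ldots,b_{d-1}$ as the eigenvalues of a single $(d-1)\times(d-1)$ matrix built from the $a_j$, and then to bound $\sum_{k}|b_k|^2$ by the squared Frobenius norm of that matrix via Schur's eigenvalue inequality, computing the latter explicitly. Concretely, set $D=\operatorname{diag}(a_1,\ldots,a_d)$ and $\mathbf{e}=\frac{1}{\sqrt d}(1,\ldots,1)^\top\in\mathbb{C}^d$, so that $P(z)=\det(zI_d-D)$. Let the columns of a $d\times(d-1)$ isometry $W$ form an orthonormal basis of the hyperplane $\mathbf{e}^\perp$, and put $C:=W^*DW$, the compression of $D$ to $\mathbf{e}^\perp$. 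The heart of the argument is the identity $\det(zI_{d-1}-C)=\frac1d P'(z)$, which says that the eigenvalues of $C$ are exactly the zeros $b_1,\ldots,b_{d-1}$ of $P'$.

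To establish this identity I would first record the rank-one determinant formula: since $\mathbf{e}^*(zI_d-D)^{-1}\mathbf{e}=\frac1d\sum_{j=1}^d\frac{1}{z-a_j}=\frac1d\frac{P'(z)}{P(z)}$, the matrix determinant lemma gives
\begin{align*}
\det(zI_d-D+\mu\,\mathbf{e}\mathbf{e}^*)=P(z)\Bigl(1+\mu\,\mathbf{e}^*(zI_d-D)^{-1}\mathbf{e}\Bigr)=P(z)+\frac{\mu}{d}P'(z).
\end{align*}
Next, completing $W$ to a unitary $U=[\,W\mid\mathbf{e}\,]$ and using $U^*\mathbf{e}=(0,\ldots,0,1)^\top$, conjugation turns $D+\lambda\,\mathbf{e}\mathbf{e}^*$ into the block matrix $\left(\begin{smallmatrix}C&g\\ h^*&s+\lambda\end{smallmatrix}\right)$ with $g=W^*D\mathbf{e}$, $h^*=\mathbf{e}^*DW$, $s=\mathbf{e}^*D\mathbf{e}$, and the Schur-complement expansion of its characteristic determinant reads
\begin{align*}
\det(zI_d-D-\lambda\,\mathbf{e}\mathbf{e}^*)=\det(zI_{d-1}-C)\Bigl[(z-s-\lambda)-h^*(zI_{d-1}-C)^{-1}g\Bigr].
\end{align*}
Comparing the coefficients of $\lambda$ on the two sides then finishes this step: the left side equals $P(z)-\frac{\lambda}{d}P'(z)$ by the rank-one formula with $\mu=-\lambda$, so its $\lambda$-coefficient is $-\frac1d P'(z)$, whereas the right side has $\lambda$-coefficient $-\det(zI_{d-1}-C)$, yielding the desired $\det(zI_{d-1}-C)=\frac1d P'(z)$.

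With the $b_k$ identified as the eigenvalues of $C$, I would invoke Schur's inequality: for any square matrix $M$ the sum of the squared moduli of its eigenvalues is at most $\operatorname{tr}(M^*M)=\|M\|_F^2$, with equality precisely when $M$ is normal. Hence $\sum_{k=1}^{d-1}|b_k|^2\le\operatorname{tr}(C^*C)$, and it remains to evaluate the right-hand side. Writing $Q=WW^*=I_d-\mathbf{e}\mathbf{e}^*$ for the orthogonal projection onto $\mathbf{e}^\perp$ and using cyclicity of the trace, one gets $\operatorname{tr}(C^*C)=\operatorname{tr}(QD^*QD)$; expanding $Q=I_d-\mathbf{e}\mathbf{e}^*$ and using that $D$ is diagonal gives
\begin{align*}
\operatorname{tr}(C^*C)=\sum_{j=1}^d|a_j|^2-\frac2d\sum_{j=1}^d|a_j|^2+\frac{1}{d^2}\Bigl|\sum_{j=1}^d a_j\Bigr|^2=\frac{d-2}{d}\sum_{j=1}^d|a_j|^2+\frac{1}{d^2}\Bigl|\sum_{j=1}^d a_j\Bigr|^2,
\end{align*}
which is exactly the right-hand side of the asserted inequality. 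The main obstacle is genuinely the compression identity $\det(zI_{d-1}-C)=\frac1d P'(z)$; once the critical points are seen as eigenvalues of a compression of $\operatorname{diag}(a_j)$, the estimate is nothing but Schur's inequality followed by a one-line Frobenius computation, and the typical non-normality of $C$ is what makes the bound generally strict.
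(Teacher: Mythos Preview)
Your argument is correct and complete: the compression identity $\det(zI_{d-1}-C)=\tfrac1d P'(z)$ is established cleanly via the matrix determinant lemma and a Schur-complement comparison of $\lambda$-coefficients, and the Frobenius computation of $\operatorname{tr}(C^*C)$ is accurate. This is essentially Pereira's ``differentiator'' proof, one of the sources the paper cites.

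Note, however, that the paper itself does not supply a proof of Conjecture~\ref{SCHOENBERGCONJECTURE}. It merely records the statement as the Malamud--Pereira theorem, surveys the history (Malamud via inverse spectral theory, Pereira via differentiators, Cheung--Ng via companion matrices, Kushel--Tyaglov via circulant matrices), and then moves on to formulate the C*-algebraic analogue. The only result actually proved in the paper is the trivial degree-$2$ case of the C*-algebraic conjecture. So there is no in-paper proof to compare against; your write-up would in fact be a welcome addition, filling in precisely the Pereira-style argument that the paper only alludes to by citation.
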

The case $d=2$ follows from a computation. Schoenberg himself proved the case $d=3$ \cite{SCHOENBERG1986}. In 1996 Ivanov and Sharma proved Conjecture \ref{SCHOENBERGCONJECTURE}  for $d=4$ \cite{IVANOVSHARMA1996}. In 2003 Malamud (using inverse spectral theorem) \cite{MALAMUD, MALAMUD2003}  and independently  Pereira (using differentiators) \cite{PEREIRA} proved the Conjecture \ref{SCHOENBERGCONJECTURE} for all $d$. In 2006  Cheung and Ng  gave another proof of Schoenberg conjecture (using companion matrices)  \cite{CHEUNGNG}. In 2016 Kushel and Tyaglov gave one more  proof of Schoenberg conjecture (using circulant matrices) \cite{KUSHELTYAGLOV}.  In 1999 de Bruin and Sharma proposed a higher order version of Schoenberg conjecture \cite{DEBRUINSHARMA}. This was proved by Cheung and Ng in 2006 \cite{CHEUNGNG}. 
\begin{conjecture}\cite{DEBRUINSHARMA, CHEUNGNG}\label{DEBRUINSHARMACONJECTURE}
\textbf{(de Bruin-Sharma Conjecture/Cheung-Ng Theorem)
	Let $P(z)\coloneqq (z-a_1)(z-a_2)\cdots (z-a_d)$ be a degree $d\geq2$ polynomial over $ \mathbb{C}$. Let $b_1, b_2, \dots, b_{d-1}$ be zeros of   $P'$. 
	If 
		\begin{align*}
		\sum_{j=1}^{d}a_j=0, 	
	\end{align*}
	then 
	\begin{align*}
		\sum_{k=1}^{d-1}|b_k|^4\leq 	\frac{2}{d^2}\left(\sum_{j=1}^{d}|a_j|^2\right)^2+\frac{d-4}{d}\sum_{j=1}^{d}|a_j|^4.
\end{align*}}	
\end{conjecture}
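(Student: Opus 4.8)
The plan is to realize the critical points $b_1,\dots,b_{d-1}$ as the eigenvalues of a compression of a normal matrix and then to bound their fourth moment by the fourth power of the singular values. Let $D=\operatorname{diag}(a_1,\dots,a_d)$ act on $\mathbb{C}^{d}$, let $\mathbf 1=(1,\dots,1)^{\mathsf T}$, let $J=\tfrac1d\mathbf 1\mathbf 1^{*}$ be the orthogonal projection onto $\operatorname{span}(\mathbf 1)$, and let $P=I-J$ be the projection onto the hyperplane $H=\mathbf 1^{\perp}$. Writing $B$ for the compression $PDP$ restricted to $H$, the first step is the Malamud--Pereira identity: for $z$ outside the spectrum, the Schur-complement/resolvent formula yields
\[
\frac{\det(zI-B)}{\det(zI-D)}=\frac1d\,\mathbf 1^{*}(zI-D)^{-1}\mathbf 1=\frac1d\sum_{j=1}^{d}\frac{1}{z-a_j}=\frac{P'(z)}{d\,P(z)},
\]
so that $\det(zI-B)=\tfrac1d P'(z)$ and hence $b_1,\dots,b_{d-1}$ are exactly the eigenvalues of $B$.

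Second, I would invoke Weyl's majorant theorem: the moduli of the eigenvalues of any matrix are majorized by its singular values, so $\sum_{k}\abs{b_k}^{4}\le\sum_{k}\sigma_k(B)^{4}=\operatorname{tr}\!\big((B^{*}B)^{2}\big)$. Since the $d\times d$ operator $B'=PDP$ annihilates $H^{\perp}=\operatorname{span}(\mathbf 1)$, its spectrum is that of $B$ together with an extra zero, whence $\operatorname{tr}\!\big((B^{*}B)^{2}\big)=\operatorname{tr}\!\big((B'^{*}B')^{2}\big)$; a short reduction using $P^{2}=P$ and cyclicity of the trace then gives
\[
\sum_{k=1}^{d-1}\abs{b_k}^{4}\le\operatorname{tr}\!\big(PD^{*}PDPD^{*}PD\big).
\]

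The decisive third step is to expand each of the four factors $P=I-J$ and compute this trace \emph{exactly}. Because $J=\tfrac1d\mathbf 1\mathbf 1^{*}$ is rank one, every term containing at least one $J$ factorizes into scalars $\mathbf 1^{*}(\cdots)\mathbf 1$ read off the arcs that the inserted projections cut out of the cyclic word $P,D^{*},P,D,P,D^{*},P,D$. Here the hypothesis $\sum_j a_j=0$ enters as $\mathbf 1^{*}D\mathbf 1=\mathbf 1^{*}D^{*}\mathbf 1=0$: any arc consisting of a single factor $D$ or $D^{*}$ contributes a vanishing scalar and kills its term. With $S_2=\sum_j\abs{a_j}^{2}$ and $S_4=\sum_j\abs{a_j}^{4}$, and using $\mathbf 1^{*}D^{*}D\mathbf 1=\mathbf 1^{*}DD^{*}\mathbf 1=S_2$ together with $\mathbf 1^{*}(D^{*}D)^{2}\mathbf 1=S_4$, the sixteen terms collapse: the pure term gives $S_4$, the four single-$J$ terms each give $-\tfrac1d S_4$, exactly the two ``opposite'' double-$J$ position pairs survive and give $\tfrac{2}{d^{2}}S_2^{2}$, and all triple- and quadruple-$J$ terms vanish since each necessarily contains a single-factor arc. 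Hence
\[
\operatorname{tr}\!\big(PD^{*}PDPD^{*}PD\big)=\frac{d-4}{d}\,S_4+\frac{2}{d^{2}}\,S_2^{2},
\]
which is precisely the right-hand side of the asserted inequality.

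The only inequality used is Weyl majorization; every remaining step is an exact identity, so the main conceptual point---and the part to be verified most carefully---is the bookkeeping of the sixteen-term expansion, where the assumption $\sum_j a_j=0$ is exactly what annihilates the unwanted cross terms and leaves the clean combination $\tfrac{2}{d^{2}}S_2^{2}+\tfrac{d-4}{d}S_4$. I expect no genuine obstacle beyond this combinatorial verification; the one subtlety is confirming that among the six double-$J$ contributions only the two pairs of opposite positions persist, which is forced by the alternating $D^{*},D$ pattern of the word.
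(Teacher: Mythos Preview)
The paper does not contain its own proof of this statement: Conjecture~\ref{DEBRUINSHARMACONJECTURE} is recorded as a known result and attributed to Cheung--Ng, with only a citation and no argument given. So there is nothing in the paper to compare your proposal against line by line.

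That said, your argument is correct. The identification of the critical points with the spectrum of the compression $B=PDP|_{H}$ via the resolvent identity is the Pereira differentiator construction; Weyl's majorant inequality gives $\sum_k|b_k|^{4}\le\operatorname{tr}\big((B^{*}B)^{2}\big)$; and your sixteen-term expansion of $\operatorname{tr}(PD^{*}PDPD^{*}PD)$ is accurate. I checked the double-$J$ case in particular: adjacent pairs (cyclically $\{1,2\},\{2,3\},\{3,4\},\{4,1\}$) each produce a single-factor arc carrying $\mathbf 1^{*}D\mathbf 1$ or $\mathbf 1^{*}D^{*}\mathbf 1$ and hence vanish, while the two opposite pairs $\{1,3\}$ and $\{2,4\}$ give $\tfrac{1}{d^{2}}(\mathbf 1^{*}D^{*}D\mathbf 1)^{2}=\tfrac{1}{d^{2}}S_2^{2}$ and $\tfrac{1}{d^{2}}(\mathbf 1^{*}DD^{*}\mathbf 1)^{2}=\tfrac{1}{d^{2}}S_2^{2}$, exactly as you claim. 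The triple- and quadruple-$J$ terms die for the reason you state.

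Relative to the literature the paper cites, your route is closest in spirit to Pereira's differentiator method (rather than Cheung--Ng's companion-matrix argument or Kushel--Tyaglov's circulant approach): you build the same compression $PDP$ and then replace the eigenvalue--trace bound used for the quadratic Schoenberg inequality by the stronger Weyl log-majorization, which is what is needed to push the exponent from $2$ to $4$. This is a clean and self-contained way to reach the de Bruin--Sharma bound, and the hypothesis $\sum_j a_j=0$ is used in precisely the minimal way---to kill every arc of length one---so your proof also makes transparent why that hypothesis is the natural one for the fourth-moment inequality in this form.
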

In 2016 Kushel and Tyaglov improved Conjecture \ref{DEBRUINSHARMACONJECTURE} and derived the following theorem \cite{KUSHELTYAGLOV}.
\begin{theorem}\label{KUSHELTYAGLOVTHEOREM}\cite{KUSHELTYAGLOV} 
\textbf{(Kushel-Tyaglov Theorem)
	Let $P(z)\coloneqq (z-a_1)(z-a_2)\cdots (z-a_d)$ be a degree $d\geq2$ polynomial over $ \mathbb{C}$. Let $b_1, b_2, \dots, b_{d-1}$ be zeros of   $P'$. Then 	
		\begin{align*}
		\sum_{k=1}^{d-1}|b_k|^4&\leq 	\frac{d-6}{d}\sum_{j=1}^{d}|a_j|^4+\frac{1}{d^2}\left(\sum_{j=1}^{d}|a_j|^2\right)^2+\frac{1}{d^2}\left|\sum_{j=1}^{d}a_j^2-\frac{1}{d^2}\left(\sum_{k=1}^{d}a_k\right)^2\right|^2\\
		&\quad +\frac{2}{d}\sum_{j=1}^{d}|a_j|^2\left|a_j+\frac{1}{d}\sum_{k=1}^{d}a_k\right|^2-\frac{4}{d^3}\sum_{j=1}^{d}|a_j|^2\left|\sum_{k=1}^{d}a_k\right|^2.
	\end{align*}}
\end{theorem}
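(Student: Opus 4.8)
The plan is to realise the critical points $b_{1},\dots,b_{d-1}$ as the whole spectrum of one explicit $(d-1)\times(d-1)$ matrix attached to $a_{1},\dots,a_{d}$, and then to push Schur's (Weyl) majorisation inequality through a trace computation. Let $D=\operatorname{diag}(a_{1},\dots,a_{d})$ act on $\mathbb{C}^{d}$, let $u=\tfrac{1}{\sqrt{d}}(1,1,\dots,1)^{\top}$, and let $Q=I-uu^{*}$ be the orthogonal projection onto $u^{\perp}$. Pereira's differentiator lemma \cite{PEREIRA} (see also \cite{MALAMUD,CHEUNGNG}) says that if $B$ is the compression of $D$ to $u^{\perp}$ (equivalently, the restriction of $QDQ$ to $u^{\perp}$), then the characteristic polynomial of $B$ is exactly $\tfrac{1}{d}P'$; thus $\{b_{1},\dots,b_{d-1}\}$ is the spectrum of $B$, and the $d\times d$ matrix $QDQ$ has spectrum $\{b_{1},\dots,b_{d-1},0\}$. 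I would take this as the starting point (for small $d$ it is checked by hand against $\sum_{k}b_{k}=\tfrac{d-1}{d}e_{1}$, $\sum_{k<\ell}b_{k}b_{\ell}=\tfrac{d-2}{d}e_{2}$, and so on, with $e_{j}=e_{j}(a_{1},\dots,a_{d})$).

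Granting this, the fourth-power bound comes from applying Schur's inequality not to $B$ but to $B^{2}$, whose eigenvalues are $b_{1}^{2},\dots,b_{d-1}^{2}$:
\[
\sum_{k=1}^{d-1}|b_{k}|^{4}=\sum_{k=1}^{d-1}\bigl|\lambda_{k}(B^{2})\bigr|^{2}\le\operatorname{tr}\bigl((B^{2})^{*}B^{2}\bigr)=\operatorname{tr}\bigl(QD^{*}QD^{*}QDQD\bigr),
\]
the last equality using $Q^{2}=Q$ together with $QDQu=0$. Applying Schur's inequality to $B$ itself would only give $\operatorname{tr}\bigl((B^{*}B)^{2}\bigr)$, which recovers the weaker de Bruin--Sharma/Cheung--Ng inequality (Conjecture \ref{DEBRUINSHARMACONJECTURE}) in the case $\sum_{j}a_{j}=0$; replacing $B$ by $B^{2}$ is exactly what yields the refinement, since $\operatorname{tr}\bigl((B^{2})^{*}B^{2}\bigr)\le\operatorname{tr}\bigl((B^{*}B)^{2}\bigr)$ in general. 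It then remains to evaluate $\operatorname{tr}(QD^{*}QD^{*}QDQD)$: writing each of the four factors $Q$ as $I-uu^{*}$ and expanding produces $16$ terms, one for each subset of the four slots carrying $uu^{*}$, and each term collapses to a product of scalars of the form $u^{*}D^{\varepsilon_{1}}\cdots D^{\varepsilon_{m}}u$; those scalars are precisely $\tfrac{1}{d}$ times $\sum_{j}a_{j}$, $\sum_{j}|a_{j}|^{2}$, $\sum_{j}a_{j}^{2}$, $\sum_{j}a_{j}|a_{j}|^{2}$, $\sum_{j}|a_{j}|^{4}$ or their conjugates. Collecting the surviving terms and regrouping gives exactly the right-hand side displayed in the statement.

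As a sanity check one can first do the sub-case $\sum_{j}a_{j}=0$, which is much lighter: then $u^{*}Du=u^{*}D^{*}u=0$, only seven of the sixteen terms survive (the empty slot-set, the four singletons, and the two non-adjacent pairs), and the bound collapses to $\tfrac{d-4}{d}\sum_{j}|a_{j}|^{4}+\tfrac{1}{d^{2}}\bigl(\sum_{j}|a_{j}|^{2}\bigr)^{2}+\tfrac{1}{d^{2}}\bigl|\sum_{j}a_{j}^{2}\bigr|^{2}$, i.e. the $\sum a_{j}=0$ instance of the theorem, already sharper than Conjecture \ref{DEBRUINSHARMACONJECTURE} since $\bigl|\sum_{j}a_{j}^{2}\bigr|\le\sum_{j}|a_{j}|^{2}$. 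The full expansion additionally produces the cross-terms $\sum_{j}|a_{j}|^{2}\bigl|a_{j}+\tfrac{1}{d}\sum_{k}a_{k}\bigr|^{2}$ and $\sum_{j}|a_{j}|^{2}\bigl|\sum_{k}a_{k}\bigr|^{2}$ and the correction $\tfrac{1}{d^{2}}(\sum_{k}a_{k})^{2}$ inside the second modulus. I expect the principal obstacle to be organisational rather than conceptual: the one genuinely non-elementary ingredient is Pereira's differentiator lemma --- which Kushel and Tyaglov instead package via circulant matrices, conjugating $QDQ$ by the discrete Fourier transform to identify it with the principal $(d-1)\times(d-1)$ submatrix of the circulant matrix whose eigenvalues are $a_{1},\dots,a_{d}$, whence their name --- and everything downstream is bounded but delicate linear algebra: the $16$-term trace expansion, the collapse of each term to symmetric functions of the $a_{j}$, and the regrouping into the precise combination written in the theorem.
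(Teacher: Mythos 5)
The paper contains no proof of this theorem; it is quoted from Kushel--Tyaglov, so there is nothing internal to compare against and your argument has to stand on its own. Your strategy is the standard differentiator route and is sound: Pereira's lemma does realise $b_1,\dots,b_{d-1}$ as the spectrum of the compression $B$ of $D=\operatorname{diag}(a_1,\dots,a_d)$ to $u^{\perp}$, Schur's inequality applied to $B^{2}$ gives $\sum_{k}|b_k|^{4}\le\operatorname{tr}(QD^{*}QD^{*}QDQD)$, and the $16$-term expansion in $Q=I-uu^{*}$ is exactly the right bookkeeping. I verified your $\sum_j a_j=0$ sub-case: precisely the seven terms you list survive and they sum to $\frac{d-4}{d}\sum_j|a_j|^{4}+\frac{1}{d^{2}}\bigl(\sum_j|a_j|^{2}\bigr)^{2}+\frac{1}{d^{2}}\bigl|\sum_j a_j^{2}\bigr|^{2}$, as you claim. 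One terminological quibble: $\sum_k|b_k|^{4}\le\operatorname{tr}\bigl((B^{*}B)^{2}\bigr)$ is Weyl's majorant theorem, not ``Schur applied to $B$''; but your actual route through $B^{2}$ uses only Schur, so nothing breaks.

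The genuine problem is your closing claim that collecting the terms ``gives exactly the right-hand side displayed in the statement''. It does not, and it cannot, because the statement as displayed is false. Writing $m=\frac{1}{d}\sum_k a_k$, the full expansion of $\operatorname{tr}(QD^{*}QD^{*}QDQD)$ produces the third summand in the form $\frac{1}{d^{2}}\bigl|\sum_j a_j^{2}-\frac{1}{d}\bigl(\sum_k a_k\bigr)^{2}\bigr|^{2}=\frac{1}{d^{2}}\bigl|\sum_j(a_j-m)^{2}\bigr|^{2}$, whereas the theorem displays $\frac{1}{d^{2}}\bigl|\sum_j a_j^{2}-\frac{1}{d^{2}}\bigl(\sum_k a_k\bigr)^{2}\bigr|^{2}$; the two differ whenever $\sum_k a_k\neq 0$. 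With the inner coefficient $\frac{1}{d}$ the bound is an identity for $d=2$ (as it must be, since $B$ is then $1\times 1$ and Schur is an equality); with $\frac{1}{d^{2}}$ it fails: for $d=2$, $a_1=1$, $a_2=i$ one has $b_1=\frac{1+i}{2}$, so the left side is $\frac{1}{4}$, while the displayed right side evaluates to $-4+1+\frac{1}{16}+5-2=\frac{1}{16}$. So you should either flag the statement as mis-transcribed (the inner $\frac{1}{d^{2}}$ should be $\frac{1}{d}$) and prove the corrected version --- which is exactly what your trace computation delivers --- or accept that no correct argument will terminate at the formula as printed. Everything else in your outline survives this correction.
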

In this paper we formulate Conjecture \ref{SCHOENBERGCONJECTURE} for polynomials over  C*-algebra. We show in Theorem \ref{DEGREETWOHOLDS} that for degree 2 C*-algebraic polynomials our conjecture holds. We also formulate C*-algebraic versions of Conjecture \ref{DEBRUINSHARMACONJECTURE} and Theorem \ref{KUSHELTYAGLOVTHEOREM} (as a conjecture).

\section{C*-algebraic Schoenberg conjecture}
	Let $\mathcal{A}$ be a   C*-algebra. Given $P(z)	\coloneqq (z-a_1)(z-a_2)\cdots (z-a_d)$ for all  $z\in \mathcal{A}$ with  $a_1, a_2, \dots, a_d \in \mathcal{A} $, we define 
\begin{align*}
	P'(z)=\sum_{j=1}^{d}(z-a_1)\cdots \widehat{(z-a_j)}\cdots (z-a_d), \quad \forall z \in \mathcal{A}
\end{align*}
where the term with cap is missing.
We state C*-algebraic version of Conjecture \ref{SCHOENBERGCONJECTURE} as follows.
\begin{conjecture}\label{CALGEBRAICSCHOENBERG} 
	\textbf{(C*-algebraic Schoenberg Conjecture)
		Let  $\mathcal{A}$ be a   C*-algebra. Let $d\in \mathbb{N}\setminus\{1\}$, $P(z)	\coloneqq (z-a_1)(z-a_2)\cdots (z-a_d)$ be a polynomial over $\mathcal{A}$ with $a_1, a_2, \dots, a_d \in \mathcal{A} $. If  $P'$ can be written as   $P'(z)= d(z-b_1)(z-b_2)\cdots (z-b_{d-1})$ on $\mathcal{A}$ with $b_1, b_2, \dots, b_{d-1} \in \mathcal{A} $, 
		then 
		\begin{align*}
			\sum_{k=1}^{d-1}b_kb_k^*\leq \frac{1}{d^2}\left[\sum_{j=1}^{d}a_j\right]\left[\sum_{j=1}^{d}a_j\right]^*+	\frac{d-2}{d}\sum_{j=1}^{d}a_ja_j^*
		\end{align*}
		and 
		\begin{align*}
			\sum_{k=1}^{d-1}b_k^*b_k\leq \frac{1}{d^2}\left[\sum_{j=1}^{d}a_j\right]^*\left[\sum_{j=1}^{d}a_j\right]+	\frac{d-2}{d}\sum_{j=1}^{d}a_j^*a_j.
	\end{align*}}	
\end{conjecture}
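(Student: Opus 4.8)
The plan is to transport the Malamud--Pereira/Cheung--Ng compression proof of the scalar Conjecture~\ref{SCHOENBERGCONJECTURE} into the matrix algebra $M_d(\mathcal A)$. I read the hypothesis in the central-variable sense, treating $z$ as a self-adjoint indeterminate commuting with $\mathcal A$, so that comparing the coefficients of $P'(z)=d\prod_{k=1}^{d-1}(z-b_k)$ with those of $P'$ yields the relations $\tilde e_m=\frac{d-m}{d}\,e_m$ for $1\le m\le d-1$, where $e_m=e_m(a_1,\dots,a_d)$ and $\tilde e_m=e_m(b_1,\dots,b_{d-1})$ denote the ordered elementary symmetric expressions $e_m(x_1,\dots,x_n)=\sum_{i_1<\dots<i_m}x_{i_1}\cdots x_{i_m}$; in particular $\sum_k b_k=\frac{d-1}{d}\sum_j a_j$. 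Set $A=\mathrm{diag}(a_1,\dots,a_d)\in M_d(\mathcal A)$, let $J\in M_d(\mathbb C)$ be the all-ones matrix, let $Q=I-\tfrac1d J$ be the scalar orthogonal projection onto $\mathbf 1^{\perp}$, and put $C:=QAQ$. Writing $\mathrm{Tr}\colon M_d(\mathcal A)\to\mathcal A$ for the entrywise trace $\mathrm{Tr}(X)=\sum_i X_{ii}$, a direct computation (using that $Q$ is scalar, hence may be cycled through $\mathrm{Tr}$, together with $Q^2=Q$) gives
\begin{align*}
\mathrm{Tr}(CC^*)=\frac{1}{d^2}\Big[\sum_{j=1}^d a_j\Big]\Big[\sum_{j=1}^d a_j\Big]^*+\frac{d-2}{d}\sum_{j=1}^d a_j a_j^*,
\end{align*}
which is exactly the right-hand side of the first asserted inequality, and analogously $\mathrm{Tr}(C^*C)$ equals the right-hand side of the second. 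Applying $\ast$ to the factorization shows that the reversed polynomial $\prod_j(z-a_j^*)$ has critical data $b_{d-1}^*,\dots,b_1^*$, and feeding this into the first inequality reproduces the second; hence it suffices to prove the first inequality for every C*-algebra.

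Granting this, the problem reduces to the single C*-valued inequality $\sum_{k=1}^{d-1}b_k b_k^*\le \mathrm{Tr}(CC^*)$, which I would attack in the spirit of Schur's inequality. First I would realize $b_1,\dots,b_{d-1}$ as the diagonal of an upper-triangular $T\in M_{d-1}(\mathcal A)$ obtained from the compression $C|_{\mathbf 1^{\perp}}\in M_{d-1}(\mathcal A)$ (a scalar change of basis, which preserves $\mathrm{Tr}(CC^*)$): over $\mathbb C$ the eigenvalues of $C|_{\mathbf 1^\perp}$ are precisely the critical points $b_k$, this being the content of the identity $\tfrac1d P'(z)/P(z)=e^*(zI-A)^{-1}e$ with $e=\tfrac1{\sqrt d}\mathbf 1$ (equivalently Cheung--Ng), and Schur triangularization then produces such a $T$. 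Once $T$ is in hand with $T_{kk}=b_k$, positivity of the off-diagonal squares gives the elementary bound
\begin{align*}
\sum_{k=1}^{d-1}b_k b_k^*=\sum_{k=1}^{d-1}T_{kk}T_{kk}^*\le\sum_{k,\ell}T_{k\ell}T_{k\ell}^*=\mathrm{Tr}(TT^*),
\end{align*}
so everything comes down to the trace-domination step $\mathrm{Tr}(TT^*)\le\mathrm{Tr}(CC^*)$.

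The hard part is exactly this last step. Over $\mathbb C$ one has $T=V^*\,(C|_{\mathbf 1^\perp})\,V$ for a unitary $V$ and $\mathrm{Tr}(TT^*)=\mathrm{Tr}(CC^*)$ by unitary invariance of the trace. Over a noncommutative $\mathcal A$ this collapses: the entrywise trace is not tracial ($\mathrm{Tr}(XY)\ne\mathrm{Tr}(YX)$ once the entries fail to commute) and is not invariant under conjugation by a $V$ with $\mathcal A$-valued entries, while a purely scalar unitary $V\in M_{d-1}(\mathbb C)$ cannot in general move the specific elements $b_k$ onto the diagonal. Thus the conjecture hinges on either performing the triangularization by a unitary of special form for which $\mathrm{Tr}(V^*CC^*V)=\mathrm{Tr}(CC^*)$ nevertheless survives, or bypassing triangularization altogether and proving $\sum_k b_k b_k^*\le\mathrm{Tr}(CC^*)$ directly from the coefficient relations $\tilde e_m=\tfrac{d-m}{d}e_m$ together with the positivity of elements of the form $xx^*$. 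I expect the latter to be the more promising line, but already for $d=3$ it requires a lower bound on the non-holomorphic cross term $b_1b_2^*+b_2b_1^*$, and I do not see how to certify the resulting inequality in closed form beyond small $d$.

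As a ladder toward the general case I would first secure the flanks. For mutually commuting $a_1,\dots,a_d$ (in particular for commutative $\mathcal A$) one can pass to the joint Gelfand spectrum and integrate the scalar Conjecture~\ref{SCHOENBERGCONJECTURE} pointwise, which should recover both inequalities. For $d=3$ and $d=4$ I would seek an explicit sum-of-squares certificate for $\mathrm{Tr}(CC^*)-\sum_k b_k b_k^*\ge 0$ built from $b_1+\dots+b_{d-1}=\tfrac{d-1}{d}\sum_j a_j$ and the second coefficient relation, in the spirit of Schoenberg's and Ivanov--Sharma's original low-degree scalar arguments; success there would both test the formulation and suggest the shape of the certificate needed for arbitrary $d$. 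A last resort is to represent $\mathcal A$ faithfully on a Hilbert space and test the operator inequality against vector states, reducing it to a family of scalar estimates---though these are genuine operator-Schoenberg inequalities rather than instances of the classical scalar one, so this reduction relocates the difficulty rather than removing it.
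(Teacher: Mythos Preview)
The statement you are trying to prove is presented in the paper as a \emph{conjecture}, not a theorem. The paper does not prove Conjecture~\ref{CALGEBRAICSCHOENBERG} in general; it only verifies the case $d=2$ (Theorem~\ref{DEGREETWOHOLDS}) by the one-line computation $c=\tfrac{a+b}{2}$, $cc^*=\tfrac14(a+b)(a+b)^*$, which makes both inequalities equalities. So there is no ``paper's own proof'' to compare against for general $d$; anything beyond $d=2$ would be new.

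Your proposal is not a proof but a research outline, and you flag the gaps yourself. The essential obstruction is real: the Cheung--Ng/Pereira mechanism rests on Schur triangularization together with unitary invariance of the scalar trace, and both fail over a noncommutative $\mathcal A$. There is in general no reason the compression $C|_{\mathbf 1^\perp}\in M_{d-1}(\mathcal A)$ admits an upper-triangular form with the prescribed $b_k$ on the diagonal via a scalar unitary, and if you allow $V\in M_{d-1}(\mathcal A)$ then $\mathrm{Tr}(V^*XV)\neq\mathrm{Tr}(X)$ in general, so the chain $\sum_k b_kb_k^*\le\mathrm{Tr}(TT^*)=\mathrm{Tr}(CC^*)$ breaks at the equality. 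Your fallback route via the coefficient relations $\tilde e_m=\tfrac{d-m}{d}e_m$ also presupposes that comparing coefficients is legitimate, which in a noncommutative C*-algebra is delicate: two polynomial maps $z\mapsto\sum c_j z^j$ on $\mathcal A$ can agree as functions without the $c_j$ coinciding unless one restricts to, say, the center or imposes additional structure; the paper's hypothesis ``$P'(z)=d\prod(z-b_k)$ on $\mathcal A$'' does not by itself hand you those symmetric-function identities. Finally, even in the commuting case your Gelfand-transform reduction needs the $a_j$ to be \emph{normal} and pairwise commuting (so that a joint spectrum exists), which is stronger than what you stated. In short, your plan correctly identifies the scalar template and the exact point at which it fails noncommutatively, but it does not close the gap, and the paper does not claim to either.
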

\begin{theorem}\label{DEGREETWOHOLDS}
	Conjecture \ref{CALGEBRAICSCHOENBERG}	holds for C*-algebraic polynomials of degree 2. 
\end{theorem}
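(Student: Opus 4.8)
The plan is to reduce the degree-$2$ case to a direct computation, because there $P'$ is an affine function of $z$ and the two sides of each claimed inequality turn out to coincide.

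\emph{Step 1: compute $P'$.} For $d = 2$ and $P(z) = (z-a_1)(z-a_2)$, the defining formula for the derivative,
\begin{align*}
	P'(z) = \sum_{j=1}^{2}(z-a_1)\cdots\widehat{(z-a_j)}\cdots(z-a_2),
\end{align*}
has exactly two summands: $(z-a_2)$ (the $j=1$ term) and $(z-a_1)$ (the $j=2$ term). Hence $P'(z) = 2z - (a_1+a_2)$ for every $z \in \mathcal{A}$. Note that only the symmetric combination $a_1+a_2$ appears, so noncommutativity plays no role here.

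\emph{Step 2: identify $b_1$.} Assume $P'$ can be written as $P'(z) = 2(z-b_1)$ on $\mathcal{A}$ with $b_1 \in \mathcal{A}$ (for $d = 2$ there is a single factor, since $d-1 = 1$). Then $2z - 2b_1 = 2z - (a_1+a_2)$ for all $z \in \mathcal{A}$, and cancelling $2z$ (equivalently, evaluating both sides at $z = 0 \in \mathcal{A}$) forces $b_1 = \tfrac12(a_1+a_2)$.

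\emph{Step 3: compare.} From Step 2,
\begin{align*}
	\sum_{k=1}^{1}b_k b_k^* = b_1 b_1^* = \frac{1}{4}(a_1+a_2)(a_1+a_2)^* = \frac{1}{2^2}\left[\sum_{j=1}^{2}a_j\right]\left[\sum_{j=1}^{2}a_j\right]^*.
\end{align*}
Since $\frac{d-2}{d} = 0$ when $d = 2$, the right-hand side of the first inequality of Conjecture \ref{CALGEBRAICSCHOENBERG} is exactly $\frac{1}{4}(a_1+a_2)(a_1+a_2)^*$, so that inequality holds, in fact with equality. The second inequality is obtained in the same way from $b_1^* b_1 = \tfrac14(a_1+a_2)^*(a_1+a_2)$. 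The only delicate point is the uniqueness used implicitly in Step 2: the factorization $P'(z) = 2(z-b_1)$ must determine $b_1$, which it does because two affine expressions agreeing on all of $\mathcal{A}$ have the same constant term. So no serious obstacle arises at degree $2$; the genuine difficulty, which is absent here, is that for $d \geq 3$ the coefficients of $P'$ are noncommutative analogues of the elementary symmetric functions of $a_1,\dots,a_d$, a factorization $P'(z) = d(z-b_1)\cdots(z-b_{d-1})$ need not exist, and when it does the $b_k$ admit no closed form.
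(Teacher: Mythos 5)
Your proof is correct and follows essentially the same route as the paper: compute $P'(z)=2z-(a_1+a_2)$, identify $b_1=\tfrac12(a_1+a_2)$, and observe that both inequalities hold with equality since the $\tfrac{d-2}{d}$ term vanishes at $d=2$. Your Step 2, which justifies why the factorization $P'(z)=2(z-b_1)$ forces $b_1=\tfrac12(a_1+a_2)$, is slightly more explicit than the paper's, which simply asserts the zero of $P'$ is $\tfrac{a+b}{2}$, but the argument is the same.
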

\begin{proof}
	Let $\mathcal{A}$ be a   C*-algebra and  $P(z)\coloneqq (z-a)(z-b)$ be a degree 2 polynomial over $\mathcal{A}$, $a, b \in \mathcal{A} $. Then the zero of $P'$ is $c=\frac{a+b}{2} $. Therefore 
	\begin{align*}
		cc^*=\frac{(a+b)(a^*+b^*)}{4}=\frac{(a+b)(a^*+b^*)}{4}+\frac{2-2}{2} (aa^*+bb^*)
	\end{align*}	
	and 
	\begin{align*}
		c^*c=\frac{(a^*+b^*)(a+b)}{4}=\frac{(a^*+b^*)(a+b)}{4}+\frac{2-2}{2} (a^*a+b^*b).
	\end{align*}
Therefore both inequalities in Conjecture \ref{CALGEBRAICSCHOENBERG} hold with equality.
\end{proof}
  We state the C*-algebraic version of Conjecture \ref{DEBRUINSHARMACONJECTURE} as follows. 
\begin{conjecture}\label{BSCONJECTURE}
	\textbf{(C*-algebraic de Bruin-Sharma Conjecture)
		Let $\mathcal{A}$ be a  C*-algebra, $d\in \mathbb{N}\setminus\{1\}$ and let $P(z)	\coloneqq (z-a_1)(z-a_2)\cdots (z-a_d)$ be a polynomial over $\mathcal{A}$ with $a_1, a_2, \dots, a_d \in \mathcal{A}$. Assume that $P'$ can be written as   $P'(z)\coloneqq d (z-b_1)\cdots (z-b_{d-1})$ on $\mathcal{A}$ with $b_1, b_2, \dots, b_{d-1} \in \mathcal{A}$. If
		\begin{align*}
			\sum_{j=1}^{d}a_j=0, 	
		\end{align*}
		then 
		\begin{align*}
			\sum_{k=1}^{d-1}(b_kb_k^*)^2\leq \frac{2}{d^2}\left(\sum_{j=1}^{d}a_ja_j^*\right)^2+	\frac{d-4}{d}\sum_{j=1}^{d}(a_ja_j^*)^2
		\end{align*}
		and 
		\begin{align*}
			\sum_{k=1}^{d-1}(b_k^*b_k)^2\leq \frac{2}{d^2}\left(\sum_{j=1}^{d}a_j^*a_j\right)^2+	\frac{d-4}{d}\sum_{j=1}^{d}(a_j^*a_j)^2.
	\end{align*}}
\end{conjecture}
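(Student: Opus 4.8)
The plan is to attack Conjecture~\ref{BSCONJECTURE} in three stages: first the degree-$2$ case by a direct computation parallel to Theorem~\ref{DEGREETWOHOLDS}, then the commutative case by reduction to a known scalar theorem, and finally to isolate what new ingredient a general C*-algebra requires. For $d=2$ the hypothesis $a_1+a_2=0$ gives $a_2=-a_1$, and the formula defining $P'$ yields $P'(z)=(z-a_2)+(z-a_1)=2z-(a_1+a_2)=2z$; comparing with $P'(z)=2(z-b_1)$ forces $b_1=0$. Hence $\sum_{k=1}^{1}(b_kb_k^*)^{2}=0=\sum_{k=1}^{1}(b_k^*b_k)^{2}$, while, using $a_2a_2^*=a_1a_1^*$,
\begin{align*}
\frac{2}{d^{2}}\left(\sum_{j=1}^{2}a_ja_j^*\right)^{2}+\frac{d-4}{d}\sum_{j=1}^{2}(a_ja_j^*)^{2}=\frac{2}{4}\left(2a_1a_1^*\right)^{2}-\frac{2}{2}\cdot 2(a_1a_1^*)^{2}=2(a_1a_1^*)^{2}-2(a_1a_1^*)^{2}=0,
\end{align*}
and the same holds with every $xx^*$ replaced by $x^*x$. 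So Conjecture~\ref{BSCONJECTURE} holds for $d=2$, in fact with equality on both sides.

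For a commutative C*-algebra $\mathcal{A}=C_0(X)$ the two inequalities coincide and can be verified pointwise. For each $x\in X$ the scalars $b_1(x),\dots,b_{d-1}(x)$ are the zeros of the ordinary derivative of the polynomial $w\mapsto\prod_{j=1}^{d}(w-a_j(x))$, and the hypothesis $\sum_{j}a_j=0$ gives $\sum_{j}a_j(x)=0$; hence the Cheung--Ng theorem (Conjecture~\ref{DEBRUINSHARMACONJECTURE}) applied at $x$ gives $\sum_{k}|b_k(x)|^{4}\le\frac{2}{d^{2}}\left(\sum_{j}|a_j(x)|^{2}\right)^{2}+\frac{d-4}{d}\sum_{j}|a_j(x)|^{4}$. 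Since this holds for every $x\in X$, it is precisely the asserted operator inequality in $C_0(X)$. Thus only the genuinely noncommutative situation remains.

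For a general C*-algebra I would try to transport one of the existing scalar proofs. The route I would try first is to pass to the C*-algebra $M_d(\mathcal{A})$, attach to $P$ a companion-type matrix over $\mathcal{A}$ (upper triangular with the $a_j$ on the diagonal), recover the $b_k$ through a Schur-complement/deflation step, and reproduce the companion-matrix identities of Cheung--Ng or the circulant-matrix identities of Kushel--Tyaglov at the level of positive elements of $M_d(\mathcal{A})$; the scalar majorization step would then be replaced by the positivity of an explicit $d\times d$ Gram-type matrix with entries assembled from $a_1,\dots,a_d$. An alternative is to expand both $P'(z)=d(z-b_1)\cdots(z-b_{d-1})$ and $P'(z)=\sum_{j}(z-a_1)\cdots\widehat{(z-a_j)}\cdots(z-a_d)$ as $\mathcal{A}$-valued functions of $z$, separate the left- and right-multiplication parts of each power of $z$ to read off the constraints the factorization hypothesis places on $(b_1,\dots,b_{d-1})$, and then estimate $\sum_k(b_kb_k^*)^{2}$ and $\sum_k(b_k^*b_k)^{2}$ directly by Cauchy--Schwarz and operator convexity.

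I expect the main obstacle to be the loss of Vieta-type symmetry. Because the $a_j$ need not commute, $\prod_j(z-a_j)$ is not a polynomial with a single coefficient at each power of $z$ but carries interleaved left and right multiplications, so there is no clean analogue of $\sum_k b_k=\tfrac{d-1}{d}\sum_j a_j$ or $\sum_{k<l}b_kb_l=\tfrac{d-2}{d}\sum_{j<i}a_ja_i$ to build on, and all known scalar proofs ultimately exploit that the zeros of $P$ and of $P'$ are honest multisets with spectral meaning. The crux will therefore be to find the right positivity/majorization statement over $\mathcal{A}$ — most plausibly about a suitable $d\times d$ matrix built from the $a_j$ — that produces the left and the right versions of the inequality simultaneously; as in the scalar case I would expect this to be considerably harder than the degree-$2$ computation, and possibly to stay open beyond low degrees.
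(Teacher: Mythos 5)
The statement you were asked to prove is presented in the paper as a \emph{conjecture}, not a theorem: the paper offers no proof of it for general $d$ and only remarks, without any computation, that it holds for degree $2$. Measured against that, your submission is correct as far as it goes but does not (and does not claim to) settle the conjecture. Your degree-$2$ computation is right and supplies exactly the verification the paper omits: $a_1+a_2=0$ forces $P'(z)=2z$, hence $b_1=0$, while the right-hand side collapses to $\tfrac{1}{2}(2a_1a_1^*)^2-2(a_1a_1^*)^2=0$, so both inequalities hold with equality. Your commutative-case argument is also sound and goes beyond anything in the paper: in $C_0(X)$ the factorization hypothesis, evaluated at a point $x$ (where $z(x)$ ranges over all of $\mathbb{C}$ as $z$ ranges over $C_0(X)$), says that $b_1(x),\dots,b_{d-1}(x)$ are the roots of the derivative of the scalar polynomial $w\mapsto\prod_{j}(w-a_j(x))$; positivity in $C_0(X)$ is pointwise, so the scalar de Bruin--Sharma/Cheung--Ng theorem finishes. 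The genuine gap is simply that the general noncommutative case is not proved: your companion-matrix and direct-expansion routes are plans rather than arguments, and you are right that the obstruction is the loss of Vieta-type identities, since $\prod_j(z-a_j)$ over a noncommutative $\mathcal{A}$ carries interleaved left and right multiplications rather than honest coefficients, so there is no analogue of $\sum_k b_k=\tfrac{d-1}{d}\sum_j a_j$ to feed into a majorization step. In short: the $d=2$ and commutative cases are correctly established (the former agreeing with, and the latter exceeding, what the paper asserts), but the conjecture itself remains open both in your submission and in the paper.
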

It is clear that Conjecture \ref{BSCONJECTURE} holds for C*-algebraic polynomials of degree 2. We next formulate a strengthening of  Conjecture \ref{BSCONJECTURE} based on  Theorem \ref{KUSHELTYAGLOVTHEOREM}.
\begin{conjecture}\label{KTC}
	\textbf{(C*-algebraic Kushel-Tyaglov Conjecture)
		Let $\mathcal{A}$ be a  C*-algebra, $n\in \mathbb{N}\setminus\{1\}$ and let $P(z)	\coloneqq (z-a_1)(z-a_2)\cdots (z-a_d)$ be a polynomial over $\mathcal{A}$ with $a_1, a_2, \dots, a_d\in \mathcal{A}$. Assume that $P'$ can be written as   $P'(z)\coloneqq d (z-b_1)\cdots (z-b_{d-1})$ on $\mathcal{A}$ with $b_1, b_2, \dots, b_{d-1} \in \mathcal{A}$. Then 
		\begin{align*}
			\sum_{k=1}^{d-1}(b_kb_k^*)^2&\leq 	\frac{d-6}{d}\sum_{j=1}^{n}(a_ja_j^*)^2+\frac{1}{d^2}\left(\sum_{j=1}^{d}a_ja_j^*\right)^2+
			\frac{1}{d^2}\left[\sum_{j=1}^{d}a_j^2-\frac{1}{d^2}\left(\sum_{k=1}^{d}a_k\right)^2\right] \left[\sum_{j=1}^{d}a_j^2-\frac{1}{d^2}\left(\sum_{k=1}^{d}a_k\right)^2\right]^*\\
			&+\frac{2}{d}\sum_{j=1}^{d}a_j\left[a_j+\frac{1}{d}\sum_{k=1}^{d}a_k\right]\left[a_j+\frac{1}{d}\sum_{k=1}^{d}a_k\right]^*a_j^*-\frac{4}{d^3}\sum_{j=1}^{d}a_j \left[\sum_{k=1}^{d}a_k \right] \left[\sum_{k=1}^{d}a_k \right]^*a_j^*
		\end{align*}
		and 
		\begin{align*}
			\sum_{k=1}^{d-1}(b_k^*b_k)^2&\leq 	\frac{d-6}{d}\sum_{j=1}^{d}(a_j^*a_j)^2+\frac{1}{d^2}\left(\sum_{j=1}^{d}a_j^*a_j\right)^2+
			\frac{1}{d^2}\left[\sum_{j=1}^{d}a_j^2-\frac{1}{d^2}\left(\sum_{k=1}^{d}a_k\right)^2\right]^* \left[\sum_{j=1}^{d}a_j^2-\frac{1}{d^2}\left(\sum_{k=1}^{d}a_k\right)^2\right]\\
			&+\frac{2}{d}\sum_{j=1}^{n}a_j^*\left[a_j+\frac{1}{d}\sum_{k=1}^{d}a_k\right]^*\left[a_j+\frac{1}{d}\sum_{k=1}^{d}a_k\right]a_j-\frac{4}{d^3}\sum_{j=1}^{d}a_j^* \left[\sum_{k=1}^{d}a_k \right]^* \left[\sum_{k=1}^{d}a_k \right]a_j.\\
	\end{align*}}		
\end{conjecture}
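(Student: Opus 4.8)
The natural plan for attacking Conjecture~\ref{KTC} is to handle the commutative case — where it is true and follows from the scalar theory — separately from the general case, where, as I explain below, the inequality as stated already fails at $d=2$.

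\emph{Commutative case.} Suppose $\mathcal{A}=C_{0}(X)$. The factorization hypothesis supplies $b_{1},\dots,b_{d-1}\in C_{0}(X)$ with $P'=d(z-b_{1})\cdots(z-b_{d-1})$ in $C_{0}(X)[z]$. Evaluation of coefficients at a point $x\in X$ is a ring homomorphism $C_{0}(X)[z]\to\mathbb{C}[z]$ that commutes with $P\mapsto P'$ (in a commutative ring $\sum_{j}\prod_{i\neq j}(z-a_{i})$ is just the formal derivative of $\prod_{i}(z-a_{i})$), so $b_{1}(x),\dots,b_{d-1}(x)$ are precisely the zeros, with multiplicity, of the derivative of $(z-a_{1}(x))\cdots(z-a_{d}(x))$. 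Applying the scalar Kushel--Tyaglov theorem (Theorem~\ref{KUSHELTYAGLOVTHEOREM}) at each $x$, and using that on $C_{0}(X)$ one has $aa^{*}=a^{*}a=|a|^{2}$, that the C*-order is the pointwise order, and that the right-hand side of Conjecture~\ref{KTC} evaluated at $x$ is exactly the scalar right-hand side with each $a_{j}$ replaced by $a_{j}(x)$ and each $b_{k}$ by $b_{k}(x)$, one gets both inequalities; the labelling of the pointwise zeros is irrelevant because the left-hand side is a symmetric function of $b_{1},\dots,b_{d-1}$ and the right-hand side does not involve them. The same pointwise reduction settles Conjectures~\ref{CALGEBRAICSCHOENBERG} and~\ref{BSCONJECTURE} over commutative $\mathcal{A}$ as well.

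\emph{General case.} Here one would like to imitate a scalar proof — the companion-matrix argument of Cheung--Ng \cite{CHEUNGNG}, the circulant-matrix argument of Kushel--Tyaglov \cite{KUSHELTYAGLOV}, or Malamud's inverse-spectral argument \cite{MALAMUD} — run over matrices with entries in $\mathcal{A}$. But this collapses already at $d=2$. Take $\mathcal{A}=M_{2}(\mathbb{C})$, $d=2$, $a_{1}=\left(\begin{smallmatrix}0&1\\0&0\end{smallmatrix}\right)$ and $a_{2}=0$. Then $P'(z)=2z-a_{1}$, so $b_{1}=\tfrac12 a_{1}$ is the unique zero and the left-hand side is $(b_{1}b_{1}^{*})^{2}=\tfrac1{16}(a_{1}a_{1}^{*})^{2}$. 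Since $a_{1}^{2}=0$, every right-hand term carrying a factor $a_{j}^{2}$, $a_{j}\bigl(a_{j}+\tfrac1d\sum_{k}a_{k}\bigr)$, or $a_{j}\bigl(\sum_{k}a_{k}\bigr)$ vanishes, so the right-hand side collapses to $\bigl(\tfrac{d-6}{d}+\tfrac1{d^{2}}\bigr)(a_{1}a_{1}^{*})^{2}=-\tfrac74(a_{1}a_{1}^{*})^{2}$, which is certainly not $\geq\tfrac1{16}(a_{1}a_{1}^{*})^{2}$, the element $(a_{1}a_{1}^{*})^{2}$ being nonzero and positive. Hence Conjecture~\ref{KTC} is false as stated. (Conjecture~\ref{BSCONJECTURE} survives at $d=2$ only because its hypothesis $\sum_{j}a_{j}=0$ forces $b_{1}=0$ and makes that inequality an identity.)

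\emph{Obstruction and how I would proceed.} The failure is structural: the coefficient $\tfrac{d-6}{d}$ in front of $\sum_{j}(a_{j}a_{j}^{*})^{2}$ is negative for $d<6$, and in the scalar theorem this deficit is absorbed by $\tfrac2d\sum_{j}|a_{j}|^{2}\bigl|a_{j}+\tfrac1d\sum_{k}a_{k}\bigr|^{2}$ and by $\tfrac1{d^{2}}\bigl|\sum_{j}a_{j}^{2}-\tfrac1{d^{2}}(\sum_{k}a_{k})^{2}\bigr|^{2}$, whose intended C*-analogues $\tfrac2d\sum_{j}a_{j}c_{j}c_{j}^{*}a_{j}^{*}$ (with $c_{j}=a_{j}+\tfrac1d\sum_{k}a_{k}$) and $\tfrac1{d^{2}}\bigl[\sum_{j}a_{j}^{2}-\tfrac1{d^{2}}(\sum_{k}a_{k})^{2}\bigr]\bigl[\,\cdot\,\bigr]^{*}$ degenerate to $0$ as soon as the $a_{j}$ fail to be normal. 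I would therefore first replace Conjecture~\ref{KTC} by a corrected statement — restricting to normal $a_{j}$ (or to commutative $\mathcal{A}$, already settled above), or replacing $\tfrac{d-6}{d}$ by $\tfrac1d\max\{d-6,0\}$, or substituting genuinely C*-positive expressions for the degenerate terms — and only then attempt the corrected inequality for general $\mathcal{A}$ with $d\geq3$. I expect that last step to be the real difficulty: every known proof of the scalar Kushel--Tyaglov theorem rests on commutative symmetric-function machinery (Vieta's formulas, eigenvalues of circulant matrices) with no evident noncommutative lift, so a genuinely new argument, not a reduction, seems to be required.
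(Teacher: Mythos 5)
The paper offers no proof of Conjecture \ref{KTC}: it is posed as an open conjecture (the only result proved in the paper is Theorem \ref{DEGREETWOHOLDS}, the degree-$2$ case of Conjecture \ref{CALGEBRAICSCHOENBERG}), so there is no argument of the paper's to compare yours against. Judged on its own, what you have written is not a proof but a resolution, and the decisive part of it, the counterexample, checks out. With $\mathcal{A}=M_2(\mathbb{C})$, $d=2$, $a_1=\left(\begin{smallmatrix}0&1\\0&0\end{smallmatrix}\right)$, $a_2=0$, the factorization hypothesis forces $b_1=\tfrac12 a_1$, so the left-hand side of the first inequality is $\tfrac1{16}(a_1a_1^*)^2$. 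On the right, $a_1^2=0$ annihilates the third term (its bracket is $\sum_j a_j^2-\tfrac14(\sum_k a_k)^2=0$) and the fourth and fifth terms (each begins $a_1$ times a scalar multiple of $a_1$ before any adjoint appears), leaving $\left(\tfrac{2-6}{2}+\tfrac14\right)(a_1a_1^*)^2=-\tfrac74(a_1a_1^*)^2$. Since $(a_1a_1^*)^2=\operatorname{diag}(1,0)$ is a nonzero positive element, the required inequality $\tfrac1{16}(a_1a_1^*)^2\leq-\tfrac74(a_1a_1^*)^2$ fails, and Conjecture \ref{KTC} is false as stated. Your diagnosis of why is also the right one: the scalar terms $|a_j|^2\left|a_j+\tfrac1d\sum_k a_k\right|^2$ and $\left|\sum_j a_j^2-\tfrac1{d^2}(\sum_k a_k)^2\right|^2$, which in Theorem \ref{KUSHELTYAGLOVTHEOREM} absorb the negative coefficient $\tfrac{d-6}{d}$ when $d<6$, have been transcribed into C*-expressions that can degenerate to $0$ for non-normal $a_j$, whereas $\sum_j(a_ja_j^*)^2$ does not degenerate.

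Your commutative-case argument is also correct: for $\mathcal{A}=C_0(X)$ coefficientwise evaluation at $x\in X$ is a *-homomorphism onto $\mathbb{C}$ intertwining the paper's $P\mapsto P'$ with the formal derivative, the C*-order on $C_0(X)$ is the pointwise order, and every term of Conjecture \ref{KTC} evaluates at $x$ to the corresponding term of Theorem \ref{KUSHELTYAGLOVTHEOREM}; the same pointwise reduction settles Conjectures \ref{CALGEBRAICSCHOENBERG} and \ref{BSCONJECTURE} over commutative $\mathcal{A}$, granting in each case the paper's standing hypothesis that the stated factorization of $P'$ exists. Two remarks. First, your parenthetical about Conjecture \ref{BSCONJECTURE} at $d=2$ is accurate: $\sum_j a_j=0$ forces $b_1=0$ and both sides vanish, so no analogous counterexample arises there. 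Second, your example does not threaten Theorem \ref{DEGREETWOHOLDS}, whose right-hand side at $d=2$ carries no negative coefficient; but it does mean that any repaired form of Conjecture \ref{KTC} must either restrict to normal or commuting $a_j$, or replace the degenerating terms by expressions that remain positive and nonvanishing for general $a_j$, exactly as you propose.
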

\begin{remark}
\textbf{C*-algebraic Sendov conjecture} has been formulated in 	\cite{MAHESHKRISHNA}.
\end{remark}

 \bibliographystyle{plain}
 \bibliography{reference.bib}

\end{document}